\documentclass{article}

\usepackage{amsmath}
\usepackage{amsfonts}
\usepackage{amsthm}

\newtheorem{lemma}{Lemma}
\newtheorem{prop}{Proposition}
\newtheorem{theorem}{Theorem}

\title{Large-time uniqueness in a data assimilation problem for Burgers' equation}
\author{Graham Cox
{\small ghcox@email.unc.edu}}

\begin{document}
\maketitle

\begin{abstract}
There is currently a great deal of interest in the $4D$-Var data assimilation scheme, in which one uses observational data to find the optimal initial condition for a differential equation by minimizing a cost function over the set of all possible initial states. For nonlinear models this cost function can be nonconvex, and so the uniqueness of minimizers is not guaranteed. In this paper we apply $4D$-Var to Burgers' equation and prove that, once a sufficient amount of data has been collected, there can be at most one physically reasonable minimizer to the variational problem.

\end{abstract}

\section{Introduction}

We consider here the problem of finding the ``best" initial condition $u$ for Burgers' equation with Dirichlet boundary conditions
\begin{align}
	y_t + y y_x &= \epsilon y_{xx} 	\label{eqn:Burgers} \\
	y(0,t) &= y(1,t) = 0 \nonumber \\
	y(x,0) &= u(x) \nonumber
\end{align}
in the presence of noisy observational data. It is known that a unique solution to the forward problem exists for any initial state $u$ in the space $V := H^1_0(0,1)$, with norm defined by $\|u\|^2_V = \int_0^1 u_x^2 dx$. We denote this solution by $y(u)$. Since Burgers' equation can be transformed to the heat equation via the Cole--Hopf transformation, we have that $y(u) \in C^{\infty}([0,1] \times (0,\infty))$. The observational data are assumed to be given by a bounded linear observation operator $H: V \rightarrow Z$, into a Hilbert space $Z$.

Given continuous (in time) observational data $z(t)$ and a fixed maximal observation time $T$, we define a cost functional
\begin{align}
	J_T(u) = \int_0^T \| H y(u) - z \|_Z^2 dt + \beta \|u - \bar{u}\|_V^2,
\end{align}
where $\bar{u}$ is a fixed background term and $\beta$ is a positive constant. Our goal is then to understand the existence of minimizers for $J_T$---functions $u_0 \in V$ satisfying
\begin{align}
	J_T(u_0) = \inf \{ J_T(u) : u \in V \}.
\label{eqn:min}
\end{align}
White \cite{W93} has established the following fundamental results.

\begin{theorem}[White '93] There exists a solution of (\ref{eqn:min}) for every $T \geq 0$. Moreover, there is a positive constant $T_1 = T_1(\epsilon, \beta, z, \|H\|)$ such that the solution is unique provided $T \leq T_1$.
\label{thm:White}
\end{theorem}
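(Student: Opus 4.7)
I would use the direct method of calculus of variations. Coercivity is immediate from $J_T(u) \geq \beta \|u - \bar u\|_V^2$, so any minimizing sequence is bounded in $V$; extract a weakly convergent subsequence $u_n \rightharpoonup u_0$. The background term is weakly lower semicontinuous since it is a continuous convex function of $u$. For the observation term, I would use the compact embedding $V \hookrightarrow L^2(0,1)$ to pass to strong convergence in $L^2$, then apply continuous dependence of the Burgers solution on initial data (cleanly obtained via the Cole--Hopf transformation, which reduces the linearization to the heat equation) to conclude $Hy(u_n) \to Hy(u_0)$ strongly in $L^2(0,T;Z)$. Lower semicontinuity of $J_T$ then gives $J_T(u_0) \leq \liminf J_T(u_n)$, so $u_0$ is a minimizer.

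\textbf{Uniqueness for small $T$.} The plan is to show that $J_T$ is strictly convex on the bounded subset of $V$ containing every candidate minimizer (any such minimizer satisfies $\|u - \bar u\|_V^2 \leq J_T(\bar u)/\beta$, so they live in a common ball). Differentiating twice,
\begin{align*}
D^2 J_T(u)[h,h] = 2\int_0^T \|Hw\|_Z^2 \, dt + 2\int_0^T \langle Hy(u) - z,\, H\psi\rangle_Z \, dt + 2\beta \|h\|_V^2,
\end{align*}
where $w$ is the first variation of $y(u)$ in direction $h$ (solving a linear advection-diffusion equation with initial datum $h$) and $\psi$ the second variation (satisfying an analogous equation, with zero initial data, forced by $(w^2)_x$). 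The first and third terms are nonnegative, and Cauchy--Schwarz bounds the middle term below by $-2\|Hy(u)-z\|_{L^2(0,T;Z)}\|H\psi\|_{L^2(0,T;Z)}$. Parabolic energy estimates on $w$ and $\psi$, with constants depending on $\epsilon$ and $\|H\|$, should yield $\|H\psi\|_{L^2(0,T;Z)} \leq C(\epsilon,\|H\|)\, T^{1/2}\|h\|_V^2$, while a candidate minimizer satisfies $\|Hy(u)-z\|_{L^2(0,T;Z)}^2 \leq J_T(\bar u)$, controlled in terms of $z$, $\bar u$ and $\|H\|$. Combining these gives $D^2 J_T(u)[h,h] \geq (2\beta - C\, T^{1/2})\|h\|_V^2$, and choosing $T_1 = T_1(\epsilon,\beta,z,\|H\|)$ so the bracket is strictly positive makes $J_T$ strictly convex along the segment joining any two candidate minimizers, forcing them to coincide.

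\textbf{Where the difficulty sits.} The technical heart of the argument is the PDE estimate on $\psi$: obtaining an explicit $T^{1/2}$-smallness factor (rather than a growing $e^{CT}$ bound) is what makes small-time strict convexity go through, and one must track how the constants degrade as $\epsilon \to 0$, since that is the source of the $\epsilon$-dependence of $T_1$. Either Cole--Hopf (which linearizes Burgers to the heat equation and provides explicit heat-kernel bounds on $w$, $\psi$) or direct energy methods on the linearized and second-variation equations look viable; I would start with Cole--Hopf because it lets one borrow standard linear estimates and isolates the $\epsilon$-dependence transparently.
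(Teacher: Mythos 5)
First, note that the paper does not actually prove this theorem: it is imported from White \cite{W93}, and the only internal commentary is the remark that White in fact establishes the stronger statement that $J_T$ has a unique \emph{critical point} for small $T$. Your existence argument (direct method, coercivity from the $\beta\|u-\bar u\|_V^2$ term, compactness of $V\hookrightarrow L^2$, continuous dependence of $y(u)$ on the initial datum) is the standard route and is consistent with the cited result. Your uniqueness argument, however, takes a genuinely different route from the one the paper attributes to White. You prove strict convexity of $J_T$ on a ball containing all candidate minimizers via a second-variation estimate; White --- and this paper, in its large-$T$ analogue --- instead recasts the Euler--Lagrange equation as a fixed-point problem $u=S_T(u)$ for the map $S_T(u)=\bar u+\beta^{-1}\Delta^{-1}p[y(u)]|_{t=0}$ and shows that $S_T$ is a contraction for small $T$. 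The contraction route buys the stronger conclusion (at most one critical point, hence no spurious local minima, which is precisely the property the present paper generalizes to large $T$), whereas localized strict convexity only excludes multiple global minimizers. Your approach is nonetheless a valid proof of the theorem as literally stated, and it avoids the adjoint equation entirely.

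Two technical caveats. First, the convexity argument needs $D^2J_T(u_s)[h,h]>0$ at every point $u_s$ of the segment joining two minimizers, not only at the endpoints; the bound $\|Hy(u)-z\|_{L^2(0,T;Z)}^2\le J_T(\bar u)$ is valid only at minimizers and should be replaced by the direct estimate $\|Hy(u_s)-z\|_{L^2(0,T;Z)}\le \|H\|\,\|u_s\|/\sqrt{2\epsilon}+Z(T)$, which follows from the energy bound (\ref{eqn:ybound}) and holds uniformly on the ball. Second, the precise power of $T$ in the bound on $\|H\psi\|_{L^2(0,T;Z)}$ is immaterial: a Gagliardo--Nirenberg treatment of the $(w^2)_x$ forcing most naturally yields $T^{1/4}$ rather than $T^{1/2}$, but any factor vanishing as $T\to0$ suffices, so this does not affect the conclusion.
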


White's proof shows $J_T$ in fact has a unique critical point for $T$ sufficiently small, which is a stronger result than is stated above as it rules out the possibility of multiple local minima. The result is not expected to hold for all times $T > 0$ due to the nonlinearity present in (\ref{eqn:Burgers}). However, we have been able to show that, for sufficiently large observation times, one in fact regains uniqueness of the minimizer.

\begin{theorem} Suppose the observational data satisfy $\| z(t) \|_Z \leq c$ for all $t \geq 0$. Then for each $K > 0$ there exists a constant $T_2 = T_2 (\epsilon, \beta, c, K, \|H\|)$ such that, for any $T \geq T_2$, the functional $J_T$ has at most one critical point satisfying $\|u\|_{L^2} \leq K$.
\label{thm:unique}
\end{theorem}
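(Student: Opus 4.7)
The plan is to argue by second-derivative positivity along line segments, leveraging the exponential decay of solutions of \eqref{eqn:Burgers}. Suppose $u_1,u_2$ are two critical points of $J_T$ with $\|u_i\|_{L^2}\le K$. Applying the mean value theorem to $\phi(s):=J_T(u_1+s(u_2-u_1))$ on $[0,1]$, the endpoint conditions $\phi'(0)=\phi'(1)=0$ yield an interior $s^*\in(0,1)$ with $\phi''(s^*)=0$, i.e., $D^2J_T(u^*)[u_2-u_1,u_2-u_1]=0$ at $u^*=u_1+s^*(u_2-u_1)$. Since the $L^2$-ball of radius $K$ is convex, $\|u^*\|_{L^2}\le K$. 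Uniqueness will follow if we show that, for $T$ sufficiently large, the Hessian of $J_T$ at any such $u^*$ is strictly positive on the direction $u_2-u_1$ (and more generally on all of $V$).

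Differentiating the forward solution map twice produces the Hessian
\[
D^2J_T(u)[v,v]=2\int_0^T\|Hy'(u)[v]\|_Z^2\,dt+2\int_0^T\langle Hy(u)-z,Hy''(u)[v,v]\rangle_Z\,dt+2\beta\|v\|_V^2,
\]
where $y'(u)[v]$ (with initial datum $v$) and $y''(u)[v,v]$ (with zero initial datum, driven by the quadratic source $-((y'(u)[v])^2)_x$) solve linearized Burgers equations. The first and third terms are non-negative, so the task reduces to controlling the indefinite middle term. It is also useful to rewrite that term via an adjoint state $\eta_T$ solving a backward parabolic problem forced by $Hy(u)-z$; the Euler--Lagrange condition then reads $\eta_T(0)+\beta B(u-\bar u)=0$, where $B$ is the Riesz map on $V$, which additionally lets us upgrade the hypothesis $\|u_i\|_{L^2}\le K$ to a quantitative $V$-bound $\|u_i\|_V\le K'$ depending only on the parameters.

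The key PDE input is exponential decay. Multiplying \eqref{eqn:Burgers} by $y$, integrating, and invoking the Poincar\'e inequality gives $\|y(u,t)\|_{L^2}\le\|u\|_{L^2}e^{-\epsilon\pi^2 t}$; bootstrapping to the $V$-norm via parabolic smoothing yields $\|y(u,t)\|_V\le M(K)e^{-\alpha t}$ whenever $\|u\|_{L^2}\le K$. Analogous energy estimates for the linearized and second-linearized equations propagate this decay to $y'(u)[v]$ and, through the quadratic forcing, to $y''(u)[v,v]$, with constants controlled by $K'$. These decay bounds make the integrands appearing in the Hessian absolutely integrable in $t$ on $[0,\infty)$, so that the finite-$T$ Hessian converges to a well-defined limiting bilinear form $D^2J_\infty(u)$ as $T\to\infty$.

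The main obstacle is quantitative: to conclude that the limiting form is strictly positive and coercive on $V$, one must carefully balance the indefinite middle integral against $\beta\|v\|_V^2$. The decay estimates give a bound
\[
\Bigl|\int_0^\infty\langle Hy(u)-z,Hy''(u)[v,v]\rangle_Z\,dt\Bigr|\le C(\epsilon,K,c,\|H\|)\,\|v\|_V^2,
\]
and one must argue that the combined contribution of this term and the non-negative first term leaves $D^2J_\infty(u)[v,v]\ge\delta\|v\|_V^2$ for some $\delta>0$; strict positivity of $D^2J_T$ for all finite $T\ge T_2$ will then follow by comparison with the limit. Carrying out this balancing---most likely by sharpening the decay rates for $y'(u)[v]$, exploiting that $y''(u)[v,v]$ vanishes at $t=0$ and is quadratic in $v$, and using Poincar\'e to convert $L^2$ control of $v$ into $V$ control where needed---is the crux of the proof.
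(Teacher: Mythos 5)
Your overall strategy --- connect the two critical points by a segment, apply Rolle's theorem to $\phi(s)=J_T(u_1+s(u_2-u_1))$ to produce a point $u^*$ in the $L^2$-ball where the Hessian degenerates in the direction $v=u_2-u_1$, and then rule this out by proving positivity of $D^2J_T$ on that ball for large $T$ --- is a legitimate alternative to the paper's argument, which instead rewrites the Euler--Lagrange equation as a fixed-point equation $u=S_T(u)$ with $S_T(u)=\bar u+\beta^{-1}\Delta^{-1}p[y(u)]|_{t=0}$ and shows that $S_T$ contracts differences of points in the $L^2$-ball once $T$ is large. The two are essentially secant and tangent versions of the same inequality, and the reduction to Hessian positivity is sound. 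Note only that the intermediate point $u^*$ is not itself a critical point, so your proposed use of the Euler--Lagrange equation to extract a $V$-bound does not apply to $u^*$ directly; moreover even for the critical points that bound scales like $\sqrt{1+Z(T)^2}\sim\sqrt{T}$, so it is not ``a constant depending only on the parameters.'' Fortunately the relevant PDE estimates can be closed using only $\|u\|_{L^2}\le K$ together with $\int_0^\infty\|y_x\|^2\,dt\le\|u\|^2/2\epsilon$, as the paper does.

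The genuine gap is that the one estimate on which everything hinges is not carried out, and the bound you do write down cannot close the argument. You bound the indefinite middle term by $C(\epsilon,K,c,\|H\|)\,\|v\|_V^2$ with $C$ independent of $T$; a $T$-independent constant that is not small only yields positivity of the Hessian under the smallness condition $C<2\beta$, which is a constraint on $\beta$, $c$, $K$, $\|H\|$ and $\epsilon$ --- not a largeness condition on $T$. (Indeed the contribution to $\int_0^T\langle Hy-z,Hy''[v,v]\rangle_Z\,dt$ from, say, $t\in[0,1]$ is already of size $c\,\|H\|\,\|v\|^2$ up to constants and does not shrink as $T\to\infty$.) What is missing is the mechanism that makes large $T$ do the work. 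In the paper this is the combination of three facts: the backward adjoint state grows only polynomially, $\|p(t)\|\le C\sqrt{1+Z(T)^2-Z(t)^2}$ (Proposition \ref{prop:Pbound}); the forward difference (in your setting, the linearization $y'(u)[v]$) decays like $e^{-\epsilon\pi^2 t}$ (Proposition \ref{prop:deltabound}); and the backward Gronwall argument from $t=T$ (Lemma \ref{lemma:gronwall}) then shows that the dangerous quantity evaluated at $t=0$ carries an overall factor $(1+T)(1+Z(T)^2)e^{-\epsilon\pi^2 T}$ (Proposition \ref{prop:rho}), which tends to zero because the exponential beats the polynomial. Your sketch (``sharpen the decay of $y'(u)[v]$, use that $y''(u)[v,v]$ vanishes at $t=0$'') does not produce any factor that decays in $T$, so as written the proposal proves at most a small-data/large-$\beta$ uniqueness statement rather than Theorem \ref{thm:unique}.
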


The result says that a bounded set in $L^2(0,1)$ can contain at most one solution of (\ref{eqn:min}) once $T$ is large enough. This conclusion is satisfying as far as applications are concerned, since $u$ typically represents a quantity subject to certain physical constraints. For instance, a predicted initial state in which the fluid velocity $u$ is, on average, greater than the speed of light, would be of little practical use.

One can easily obtain an \textit{a priori} bound on minimizers as $J_T(u_0) \leq J_T(0)$, whence $\|u_0 - \bar{u}\|_V^2 \leq \beta^{-1} Z(T)^2 + \| \bar{u}\|_V^2$, where we have defined the integrated data function $Z(T)^2 = \int_0^T \|z(t)\|_Z^2 dt$. However, this estimate only appears to be useful for small $T$ unless one makes rather strong assumptions concerning the asymptotic behavior of $Z(T)$. This explains the $L^2$-bound on $u$ required in Theorem \ref{thm:unique}, which is not necessary for Theorem \ref{thm:White} to hold. Long-time uniqueness can be obtained in the absence of the bound $\|u\| \leq K$ if one assumes $Z(T) = o(T)$. Physically this means that the measurement error decreases over time, \textit{e.g.} $\| z(t) \|_Z = O(t^{-\delta})$ for any $\delta > 0$. Since this scenario seems unlikely to arise in practice, we do not dwell on it any longer.

We finally observe that Theorem \ref{thm:unique} holds more generally for polynomially bounded data, $Z(T) = \mathcal{O} \left( T^N \right)$, with the required observation time $T_2$ then additionally depending on $N$. However, the most physically relevant case is that of uniformly bounded observations with $\| z(t) \|_Z \leq c$, hence $Z(T) \leq c \sqrt{T}$.

\textbf{Acknowledgments:} The author would like to thank Damon McDougall for numerous enlightening conversations throughout the preparation of this work. This research has been supported by the Office Naval Research under the MURI grant N00014-11-1-0087.

\section{Fixed-point interpretation of the Euler--Lagrange equation}
In this section we review the Euler--Lagrange equation for $J_T$, and show that it can be interpreted as a fixed-point equation for a nonlinear compact operator $S_T: V \rightarrow V$. From \cite{W93} we know that the derivative of $J_T$ evaluated at $u$, in the direction $v \in V$, satisfies
\begin{align}
	\frac{1}{2} DJ_T(u)(v) &= \left< p(0), v \right> + \beta \left< (u - \bar{u})_x, v \right> \nonumber \\
		&= \left< p(0) - \beta (u - \bar{u})_{xx}, v \right>,
		\label{eqn:gradJ}
\end{align}
where $p(0)$ denotes the solution to the adjoint equation
\begin{align}
	-p_t - p_x y - \epsilon p_{xx} =& - \left[ H^* (Hy - z) \right]_{xx} \label{eqn:adjoint} \\
	p(0,t) &= p(1,t) = 0 \nonumber \\
	p(T) &= 0 \nonumber
\end{align}
evaluated at time $t = 0$. Thus $u$ is a critical point of $J_T$ precisely when $p(0) = \beta (u - \bar{u})_{xx}$. Note that this is a nonlinear equation for $u$, because $p$ depends linearly on $y$, which in turn depends nonlinearly on $u$. Writing these dependencies more explicitly as $p = p[y]$ and $y = y(u)$, we have that $u$ is a critical point if and only if is is a fixed point of the nonlinear map
\begin{align}
	S_T(u) = \bar{u} + \left. \frac{1}{\beta} \Delta^{-1}  p \left[ y (u) \right] \right|_{t=0}
\end{align}
acting on $V$.

Our goal is thus to show that $S_T$ is a contraction mapping under the conditions of Theorem \ref{thm:unique}. This is accomplished in the following section by establishing certain decay estimates for solutions of Burgers' equation. This large-time decay is strong enough to ameliorate the growth in the corresponding solutions of the adjoint equation, which is caused by the data-dependent forcing term on the right-hand side of equation (\ref{eqn:adjoint}), and so we are able to obtain the desired bound on $S_T$ when $T$ is large.

%%%%%%%%%%%%%%%%%%%%%%%%%%%%%%%%%%%%%%%%%%%%%%%%%
%%%%%%%%%%%%%%%%%%%%%%%%%%%%%%%%%%%%%%%%%%%%%%%%%
%%%%%%%%%%%%%%%%%%%%%%%%%%%%%%%%%%%%%%%%%%%%%%%%%
%%%%%%%%%%%%%%%%%%%%%%%%%%%%%%%%%%%%%%%%%%%%%%%%%
%%%%%%%%%%%%%%%%%%%%%%%%%%%%%%%%%%%%%%%%%%%%%%%%%

\section{The contractive estimate}
To study the contractivity of $S_T$, we fix $u_1$ and $u_2$ in $V$ and define $v = u_1 - u_2$, $\delta = y_1 - y_2$ and $\rho = p_1 - p_2$, where $y_i := y(u_i)$ and $p_i = p[y_i]$ for $i = 1,2$. For the remainder of the paper we let $\| \cdot \|$ denote the $L^2$ norm on $[0,1]$, hence $\|v\|_V = \| v_x \|$ for any $v \in V$. We thus wish to estimate
\begin{align*}
	\| S_T(u_2) - S_T(u_1) \|_V = \frac{1}{\beta}  \| \Delta^{-1} \rho(0) \|_V
\end{align*}
above in terms of $\| u_2 - u_1 \|_V = \| v_x \|$. It is a consequence of standard elliptic estimates that there is a positive constant $K$ such that
\begin{align*}
	\| \Delta^{-1} \rho \|_V \leq K \| \rho \|
\end{align*}
holds uniformly for all $\rho \in V$, so by the Poincar\'{e} inequality it would suffice to establish the estimate
\begin{align}
	\| \rho(0) \| \leq C(T) \| v \|
\end{align}
with some positive constant satisfying $C(T) \rightarrow 0$ as $T \rightarrow \infty$.

A key ingredient in the following estimates is the fact that any solution $y = y(u)$ of Burgers' equation is contained in $L^2(0,\infty; V)$. More explicitly, we have from \cite{W93} that
\begin{align}
	\int_0^T \| y_x \|^2 dt \leq \frac{ \| u \|^2}{2 \epsilon}
\label{eqn:ybound}
\end{align}
for any $T \geq 0$.

This will frequently be used in conjunction with the following version of Gronwall's inequality.

\begin{lemma} Suppose $u'(t) \leq [a(t) - a_0] u(t) + b(t)$, where $a(t)$, $b(t)$ and $a_0$ are nonnegative. If $A := \int_0^{\infty} a(t) dt < \infty$, then
\begin{align*}
	u(T) \leq e^{A - a_0 T} \left[ u(0) + \int_0^T b(t) a^{a_0 t} dt \right]
\end{align*}
for all $T \geq 0$.
\label{lemma:gronwall}
\end{lemma}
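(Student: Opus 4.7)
The plan is to use the standard integrating-factor trick for linear first-order inequalities. First I would rewrite the hypothesis as $u'(t) - [a(t) - a_0] u(t) \leq b(t)$ and introduce the integrating factor
\[
\mu(t) = \exp\!\left(a_0 t - \int_0^t a(s)\,ds\right),
\]
which is positive and differentiable with $\mu'(t)/\mu(t) = a_0 - a(t)$. Multiplying the inequality by $\mu(t)$ gives $(\mu u)'(t) \leq \mu(t) b(t)$, and integrating from $0$ to $T$ yields
\[
\mu(T) u(T) \leq u(0) + \int_0^T \mu(t) b(t)\, dt.
\]

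Next I would divide by $\mu(T)$ and use the assumed finiteness of $A$ to control the resulting exponentials. Specifically,
\[
\frac{1}{\mu(T)} = \exp\!\left(\int_0^T a(s)\,ds - a_0 T\right) \leq e^{A - a_0 T},
\]
since $a \geq 0$ implies $\int_0^T a \leq A$. For the inhomogeneous term, I would observe the telescoping
\[
\frac{\mu(t)}{\mu(T)} = e^{a_0(t - T)} \exp\!\left(\int_t^T a(s)\,ds\right) \leq e^{a_0(t-T)} e^{A},
\]
so that $\mu(t)/\mu(T) \leq e^{A - a_0 T} e^{a_0 t}$. Combining these two bounds produces the stated inequality.

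The only subtlety is the regularity needed for the integrating-factor calculation to be rigorous; one should specify that $u$ is, say, absolutely continuous (or merely that the inequality is interpreted in the appropriate weak sense), and that $a, b$ are locally integrable so that $\mu$ and $\mu u$ are absolutely continuous and the fundamental theorem of calculus applies. Given the routine application intended in the sequel, I would simply state the regularity assumption implicitly and proceed as above; the argument itself presents no genuine obstacle and occupies only a few lines.
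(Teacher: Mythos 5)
Your argument is correct and is essentially the paper's own proof: both use the integrating-factor trick with the same exponential bounds (your $\mu$ is simply the reciprocal of the paper's $\mu(t) = \exp\{\int_0^t [a(s)-a_0]\,ds\}$, so multiplying by yours is the same as dividing by theirs). Note also that the ``$a^{a_0 t}$'' in the statement is a typo for $e^{a_0 t}$, which is what both you and the paper actually prove.
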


There are no sign restrictions of $u(t)$, though it will always be nonnegative in our applications. If $u(0) = 0$ the lemma immediately implies $u(T) \leq e^A \int_0^T b(t) dt$. A more subtle application will be given in the proof of Proposition \ref{prop:rho}.

\begin{proof} We define the integrating factor
\begin{align*}
	\mu(t) = \exp \left\{ \int_0^t [a(s) - a_0] ds \right\}
\end{align*}
and observe that $e^{-a_0 t} \leq \mu(t) \leq e^{A-a_0 t}$ for all $t \geq 0$. It follows that
\begin{align*}
	\frac{d}{dt} \left( \frac{u(t)}{\mu(t)} \right) \leq b(t) e^{a_0 t}
\end{align*}
and the proof is completed by integrating both sides of the above inequality.
\end{proof}

We now investigate boundedness of solutions to the adjoint equation (\ref{eqn:adjoint}), with Dirichlet boundary conditions and terminal condition $p(T) = 0$.

\begin{prop} There exists a constant $C$, depending only on $\epsilon$, $\| u \|$ and $\| H \|$, such that
\begin{align*}
	\| p(t) \| \leq C \sqrt{1 + Z(T)^2- Z(t)^2}
\end{align*}
for any $T \geq 0$ and all $0 \leq t \leq T$.
\label{prop:Pbound}
\end{prop}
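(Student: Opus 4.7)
The plan is to derive a differential inequality for $\phi(t) := \|p(t)\|^2$ via a standard energy estimate, then apply Lemma \ref{lemma:gronwall} going backward in time starting from the terminal condition $\phi(T) = 0$. First, I would test equation (\ref{eqn:adjoint}) against $p$ and integrate by parts, using the Dirichlet boundary conditions on $p$ to eliminate all boundary terms. The transport term produces $\int p p_x y = -\tfrac12 \int p^2 y_x$, the dissipative term produces $-\epsilon \|p_x\|^2$, and the forcing contributes $\int p_x [H^*(Hy-z)]_x$. This yields
\begin{align*}
-\tfrac{1}{2}\tfrac{d}{dt}\|p\|^2 = -\tfrac{1}{2}\int p^2 y_x\,dx - \epsilon\|p_x\|^2 + \int p_x\,[H^*(Hy-z)]_x\,dx.
\end{align*}

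Next, I would estimate each problematic term so that everything on the right can be written as $a(t)\|p\|^2 + b(t) - (\text{nonnegative})$. For the transport term, I would use the 1D Gagliardo--Nirenberg inequality $\|p\|_{L^4}^2 \le C\|p\|\|p_x\|$ together with Young's inequality so that $\tfrac12\int p^2|y_x| \leq \tfrac{\epsilon}{2}\|p_x\|^2 + C_1\|y_x\|^2\|p\|^2$. For the forcing, I would apply Cauchy--Schwarz followed by Young's, and use the boundedness of $H$ to control $\|[H^*(Hy-z)]_x\| \leq \|H\|^2\|y_x\| + \|H\|\|z\|_Z$. Choosing Young constants to absorb the $\|p_x\|^2$ contributions into $\epsilon\|p_x\|^2$ leaves the clean inequality
\begin{align*}
-\tfrac{d}{dt}\|p\|^2 \leq C_1\|y_x\|^2\,\|p\|^2 + C_2\|y_x\|^2 + C_3\|z\|_Z^2,
\end{align*}
with all constants depending only on $\epsilon$ and $\|H\|$.

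Finally, reversing time via $\psi(s) := \|p(T-s)\|^2$ turns this into the forward inequality $\psi'(s) \leq a(T-s)\psi(s) + b(T-s)$ with $\psi(0)=0$, to which Lemma \ref{lemma:gronwall} applies (with $a_0=0$). The resulting exponential factor $\exp(C_1 \int_t^T \|y_x\|^2\,d\sigma)$ is uniformly controlled using the a priori bound (\ref{eqn:ybound}), contributing a constant depending on $\epsilon$ and $\|u\|$. The inhomogeneous integral splits into $C_2\int_t^T\|y_x\|^2$, again bounded by a constant via (\ref{eqn:ybound}), and $C_3\int_t^T\|z\|_Z^2 = C_3(Z(T)^2-Z(t)^2)$. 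Taking square roots gives the stated bound with $C$ depending on $\epsilon$, $\|u\|$, and $\|H\|$. The main obstacle is the nonlinear transport term $\int p^2 y_x$: a crude $L^\infty$ estimate on $y_x$ would fail, and only the interpolation $\|p\|_{L^4}^2\lesssim \|p\|\|p_x\|$ allows one to absorb the worst part into $\epsilon\|p_x\|^2$ while leaving a multiplier $\|y_x\|^2$ whose time integral is known to be finite a priori.
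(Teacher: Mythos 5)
Your proof is correct and follows essentially the same route as the paper: an energy estimate for $\|p\|^2$, absorption of the gradient contributions into $-\epsilon\|p_x\|^2$ via Young's inequality, and a backward-in-time application of Lemma \ref{lemma:gronwall} with both the exponential factor and the inhomogeneous integral controlled by the a priori bound (\ref{eqn:ybound}). The only (immaterial) difference is the transport term, which the paper bounds directly as $\|y\|_{\infty}\|p\|\|p_x\|$ without integrating by parts, using $\|y\|_{\infty}\le\|y_x\|$ in place of your $L^4$ interpolation.
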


\begin{proof}
We differentiate $\| p(t) \|^2$, finding that
\begin{align*}
	-\frac{1}{2} \frac{d}{dt} \|p\|^2 &= - \epsilon \int_0^1 p_x^2 \ dx + \int_0^1 p p_x y \ dx + \left< Hp, Hy - z \right>_Z \\
		&\leq \left( \frac{\lambda_1 \|H\|}{2} + \frac{\lambda_2 \| y \|_{\infty}}{2} - \epsilon \right) \|p_x\|^2
		+ \frac{\| y \|_{\infty}}{2 \lambda_2} \|p \|^2 + \frac{ \|H\|}{2 \lambda_1} \|Hy - z\|_Z^2
\end{align*}
for any positive $\lambda_1$ and $\lambda_2$. We then choose $\lambda_1 = 2 \epsilon / 3 \| H \|$ and $\lambda_2 = 2 \epsilon / 3 \|y \|_{\infty}$, so that
\begin{align*}
	-\frac{d}{dt} \|p\|^2 &\leq 
	\left( \frac{3 \| y \|_{\infty}^2}{2 \epsilon} - \frac{2 \epsilon^2 \pi}{3} \right) \|p \|^2 + \frac{ 3 \|H\|^2}{2 \epsilon} \|Hy - z\|_Z^2.
\end{align*}
We are now in a position to apply Lemma \ref{lemma:gronwall} to the function $\tilde{p}(t) := p(T-t)$, because
\begin{align*}
	\int_0^{\infty} \| y(t) \|_{\infty} ^2 dt \leq \frac{\| u \|^2}{2 \epsilon}
\end{align*}
by Equation (\ref{eqn:ybound}). It follows that
\begin{align*}
	\|p(t)\|^2 \leq C \int_t^T \| Hy - z \|_Z^2 dt,
\end{align*}
where $C$ depends on $\epsilon$, $\| u \|$ and $\| H \|$. We finally estimate $\| Hy - z\|^2 \leq 2 \| H \|^2 \| y_x \|^2 + 2 \|z\|_Z^2$ and integrate, again applying Equation (\ref{eqn:ybound}) to conclude that the first term is bounded.
\end{proof}

We next estimate the difference $\delta = y_1 - y_2$ between two solutions of Burgers' equation with initial values $u_1$ and $u_2$, respectively. 
\begin{prop} There exist a constant $A$, depending only on $\epsilon$, $\| u_1 \|$ and $\| u_2 \|$, such that
\begin{align*}
	\| \delta(t) \|^2 \leq A \| u_1 - u_2 \|^2 e^{-\epsilon \pi^2 t}
\end{align*}
for all $t \geq 0$.
\label{prop:deltabound}
\end{prop}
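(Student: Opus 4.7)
The plan is to derive an energy inequality for $\delta$ from the difference of the two Burgers equations, absorb the resulting nonlinear correction into the diffusion via a one-dimensional Sobolev interpolation, and then apply Lemma \ref{lemma:gronwall} with the $L^2(0,\infty;V)$ bound (\ref{eqn:ybound}) playing the role of the finite integral of $a(t)$.

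First, I would subtract the two copies of (\ref{eqn:Burgers}) satisfied by $y_1$ and $y_2$. Using the factorization $y_1 (y_1)_x - y_2 (y_2)_x = \tfrac{1}{2}[(y_1+y_2)\delta]_x$, the difference satisfies
\begin{align*}
	\delta_t + \tfrac{1}{2}[(y_1+y_2)\delta]_x = \epsilon \delta_{xx}
\end{align*}
with homogeneous Dirichlet boundary conditions and $\delta(0) = u_1 - u_2$. Multiplying by $\delta$, integrating over $[0,1]$, and integrating the nonlinear flux by parts (all boundary terms vanish) produces the identity
\begin{align*}
	\tfrac{1}{2}\frac{d}{dt}\|\delta\|^2 + \epsilon \|\delta_x\|^2 = -\tfrac{1}{4}\int_0^1 (y_1 + y_2)_x \delta^2 \, dx.
\end{align*}

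Next, I would estimate the cubic right-hand side using the Agmon-type interpolation $\|\delta\|_\infty^2 \leq \|\delta\|\|\delta_x\|$ (valid for $\delta \in H^1_0(0,1)$) together with Cauchy-Schwarz, giving $\left|\int (y_1+y_2)_x \delta^2 \, dx\right| \leq \|(y_1+y_2)_x\|\|\delta\|\|\delta_x\|$. A Young inequality with its weight tuned so that $\tfrac{\epsilon}{2}\|\delta_x\|^2$ is absorbed into the left, followed by the Poincaré inequality $\|\delta_x\|^2 \geq \pi^2 \|\delta\|^2$, then yields a differential inequality of exactly the form treated by Lemma \ref{lemma:gronwall}:
\begin{align*}
	\frac{d}{dt}\|\delta\|^2 \leq \bigl[a(t) - \epsilon \pi^2\bigr]\|\delta\|^2, \qquad a(t) := \tfrac{1}{16\epsilon}\|(y_1+y_2)_x\|^2.
\end{align*}

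Finally, applying (\ref{eqn:ybound}) to each of $y_1$ and $y_2$ and using $\|(y_1+y_2)_x\|^2 \leq 2\|y_{1,x}\|^2 + 2\|y_{2,x}\|^2$ shows that $\int_0^\infty a(t)\,dt$ is finite and controlled by a constant depending only on $\epsilon$, $\|u_1\|$ and $\|u_2\|$. Lemma \ref{lemma:gronwall} with $b \equiv 0$ and $a_0 = \epsilon \pi^2$ then delivers the claimed estimate, with $A$ equal to the exponential of that integral. I expect the one genuinely delicate point to be the estimation of the cubic term: the weaker bound $\|\delta\|_\infty \leq \|\delta_x\|$ would place a coefficient in front of $\|\delta_x\|^2$ that is not uniformly small in time, blocking the absorption step and forfeiting the Poincaré-driven rate. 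Retaining an extra factor of $\|\delta\|$ via the Agmon-type inequality is exactly what keeps the full exponential rate $\epsilon \pi^2$ intact.
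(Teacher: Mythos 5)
Your argument is correct and follows essentially the same route as the paper: an $L^2$ energy estimate for $\delta$, absorption of the trilinear term into the dissipation via Young's inequality, the Poincar\'e inequality to extract the rate $\epsilon \pi^2$, and Lemma \ref{lemma:gronwall} with $b \equiv 0$ and $\int_0^\infty a(t)\,dt$ controlled by (\ref{eqn:ybound}). The only difference is in how the trilinear term is handled: the paper writes the nonlinearity as $\delta y_{2x} + y_1 \delta_x$ and bounds the resulting cross terms using $\| y_i \|_\infty \leq \| y_{ix} \|$, whereas you use the symmetric flux form $\tfrac{1}{2}[(y_1+y_2)\delta]_x$ and place the interpolation on $\delta$ instead, via $\| \delta \|_\infty^2 \leq \| \delta \| \, \| \delta_x \|$; both choices yield an integrable $a(t)$ for the same reason, so the two proofs are structurally identical.
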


\begin{proof}
We first compute
\begin{align*}
	\delta_t &= y_{1t} - y_{2t} \\
		&= \epsilon y_{1xx} - y_1 y_{1x} - (\epsilon y_{2xx} - y_2 y_{2x} ) \\
		&= \epsilon (y_1 - y_2)_{xx} + y_2 y_{2x} - y_1 y_{2x} + y_1 y_{2x} - y_1 y_{1x} \\
		&= \epsilon \delta_{xx} - \delta y_{2x} - y_1 \delta_x
\end{align*}
and hence find
\begin{align*}
	\frac{1}{2} \frac{d}{dt} \| \delta \|^2 & \leq - \epsilon \| \delta_x \|^2 + 2 \| y_2 \|_{\infty} \| \delta \| \| \delta_x \| + \| y_1 \|_{\infty} \| \delta \| \| \delta_x \| \\
		& \leq \left( \lambda_1 \| y_2 \|_{\infty} + \frac{\lambda_2 \| y_1 \|_{\infty}}{2} \right) \| \delta \|^2 + \left( \frac{\| y_2 \|_{\infty}}{\lambda_1} + \frac{\| y_1 \|_{\infty}}{2 \lambda_2} - \epsilon \right) \| \delta_x \|^2
\end{align*}
for any positive $\lambda_1$ and $\lambda_2$. We choose $\lambda_1 = 4 \| y_2 \|_{\infty} / \epsilon$ and $\lambda_2 = 2 \| y_1 \|_{\infty} / \epsilon$, with the result that
\begin{align*}
	\frac{d}{dt} \| \delta \|^2 
		& \leq \left( \frac{8 \| y_2 \|^2_{\infty} }{\epsilon} + \frac{2 \| y_1 \|^2_{\infty}}{\epsilon} \right) \| \delta \|^2 - \epsilon \| \delta_x \|^2 \\
		& \leq \left( \frac{8 \| y_2 \|^2_{\infty} }{\epsilon} + \frac{2 \| y_1 \|^2_{\infty}}{\epsilon} - \epsilon \pi^2 \right) \| \delta \|^2.
\end{align*}
We then apply Lemma \ref{lemma:gronwall} with $b(t) = 0$ to obtain the desired result.
\end{proof}

%%%%%%%%%%%%%%%%%%%%%%%%%%%%%%%%%%%%%%%%%%%%%%%%%
%%%%%%%%%%%%%%%%%%%%%%%%%%%%%%%%%%%%%%%%%%%%%%%%%

We now come to the main estimate of this section, which will yield the contractivity of $S_T$ for sufficiently large values of $T$.

\begin{prop} There exists a constant $A$, depending only on $\epsilon$, $\| H \|$, $\| u_1 \|$ and $\| u_2 \|$, such that
\begin{align*}
	\| \rho(0) \| \leq A  \left(1 + T \right) \left(1 + Z(T)^2 \right) \| u_1 - u_2 \| e^{-\epsilon \pi^2 T} 
\end{align*}
for any $T \geq 0$.
\label{prop:rho}
\end{prop}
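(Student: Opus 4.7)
The plan is to derive a backward PDE for $\rho = p_1 - p_2$, establish a differential inequality for $\|\rho(t)\|^2$ by an energy argument, and then apply Lemma~\ref{lemma:gronwall} to a time-reversed variable. First I would subtract the adjoint equations, using the splitting $p_{1x}y_1 - p_{2x}y_2 = \rho_x y_1 + p_{2x}\delta$, to conclude that $\rho$ satisfies
\begin{align*}
-\rho_t - \rho_x y_1 - \epsilon \rho_{xx} = p_{2x}\delta - [H^\ast H \delta]_{xx}
\end{align*}
with Dirichlet boundary conditions and terminal condition $\rho(T) = 0$.

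Next I would multiply by $\rho$ and integrate, taking $-\int \rho[H^\ast H\delta]_{xx}\,dx = \langle H\delta, H\rho\rangle_Z$ as in the proof of Proposition~\ref{prop:Pbound}. The transport term contributes $\|y_1\|_\infty \|\rho\|\|\rho_x\|$; the cross term $\int p_{2x}\delta\,\rho\,dx$ is integrated by parts to move the derivative off $p_2$, so that Proposition~\ref{prop:Pbound} supplies a pointwise bound on $\|p_2\|$; and the $H$-term contributes $\|H\|^2\|\delta_x\|\|\rho_x\|$. Applying Young's inequality with suitable splitting constants to absorb the $\|\rho_x\|^2$ pieces into $-\epsilon\|\rho_x\|^2$ and invoking Poincar\'e to convert what remains into $-\epsilon\pi^2\|\rho\|^2$, I arrive at
\begin{align*}
-\frac{d}{dt}\|\rho\|^2 \leq \bigl[ a(t) - \epsilon \pi^2 \bigr]\|\rho\|^2 + b(t),
\end{align*}
where $a$ is nonnegative and integrable on $[0,\infty)$ thanks to (\ref{eqn:ybound}) applied to $y_1$, and $b(t)$ is a forcing term involving $\|\delta\|$, $\|\delta_x\|$ and $\|p_2\|$.

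Applying Lemma~\ref{lemma:gronwall} to $\tilde\rho(\tau) := \rho(T-\tau)$---which has $\tilde\rho(0)=0$ and satisfies the forward-in-$\tau$ version of the inequality above---gives
\begin{align*}
\|\rho(0)\|^2 \leq e^{A - \epsilon\pi^2 T}\int_0^T b(T-\tau)\, e^{\epsilon\pi^2 \tau}\,d\tau,
\end{align*}
with the crucial exponential prefactor $e^{-\epsilon\pi^2 T}$; this is the ``more subtle application'' of the lemma alluded to in the text. To bound the remaining integral I would use Proposition~\ref{prop:Pbound} to get $\|p_2(s)\|^2 \leq C(1+Z(T)^2)$ uniformly in $s\in[0,T]$, Proposition~\ref{prop:deltabound} for the decay $\|\delta(s)\|^2 \leq A\|u_1-u_2\|^2 e^{-\epsilon\pi^2 s}$, and its integrated counterpart $\int_t^\infty \|\delta_x\|^2\,ds \leq C\|u_1-u_2\|^2 e^{-\epsilon\pi^2 t}$, which follows by integrating the energy inequality from the proof of Proposition~\ref{prop:deltabound}.

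The hard part will be the bookkeeping in this final estimate: several of the forcing contributions carry the exponential rate $\epsilon\pi^2$, exactly matching the Gronwall rate, and it is precisely this resonance that produces the linear factor $(1+T)$ in the stated bound rather than a bounded constant. Balancing these rates cleanly, and in particular handling the cross term $\|p_2\|\|\delta_x\|$---for which only pointwise control of $\|p_2\|$ and only integrated control of $\|\delta_x\|$ are available---will require a judicious choice of Young parameters and possibly an additional integrated $H^1$ bound on $p_2$ obtained by retaining the dissipation term in the proof of Proposition~\ref{prop:Pbound}.
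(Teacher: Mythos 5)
Your setup---the equation for $\rho$ via the splitting $p_{1x}y_1-p_{2x}y_2=\rho_x y_1+p_{2x}\delta$, the energy estimate with the cross term integrated by parts onto $p_2$, the Young/Poincar\'e step leaving $-\epsilon\pi^2\|\rho\|^2$, and the use of (\ref{eqn:ybound}) to make $a(t)$ integrable---is exactly the paper's route, carried out in more detail than the paper itself gives. The gap is at the Gronwall step, and it is not mere bookkeeping. Your stated conclusion
\begin{align*}
\|\rho(0)\|^2\le e^{A-\epsilon\pi^2T}\int_0^T b(T-\tau)\,e^{\epsilon\pi^2\tau}\,d\tau
\end{align*}
is indeed the literal output of Lemma \ref{lemma:gronwall} applied to $\tilde\rho$, but after substituting $t=T-\tau$ it reads $e^{A}\int_0^T b(t)\,e^{-\epsilon\pi^2 t}\,dt$: the advertised prefactor $e^{-\epsilon\pi^2T}$ is exactly cancelled by the weight $e^{\epsilon\pi^2\tau}$ near $\tau=T$, i.e.\ near $t=0$, which is precisely where $b(t)\sim(1+Z(T)^2)\|\delta_x(t)\|^2$ is of size $\|u_1-u_2\|_V^2$ and not small. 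Feeding in your integrated bound $\int_0^\infty\|\delta_x\|^2\,dt\le C\|u_1-u_2\|^2$ then yields only $\|\rho(0)\|^2\le C(1+Z(T)^2)\|u_1-u_2\|^2$, with no decay in $T$ whatsoever; no choice of Young parameters or auxiliary $H^1$ bound on $p_2$ can recover a factor $e^{-\epsilon\pi^2T}$ from this starting point.

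Contrast this with the paper's Equation (\ref{eqn:rho}), which places the weight $e^{\epsilon\pi^2 t}$ on $\|\delta_x(t)\|^2$ in the \emph{original} (unreversed) time variable, i.e.\ damps the forcing at time $t$ by $e^{-\epsilon\pi^2(T-t)}$ rather than by $e^{-\epsilon\pi^2 t}$. Only with that weighting does the subsequent substitution of $\epsilon\|\delta_x\|^2\le C\left(\|y_1\|_\infty^2+\|y_2\|_\infty^2\right)\|\delta\|^2-\frac{d}{dt}\|\delta\|^2$ from Proposition \ref{prop:deltabound}, followed by integration by parts, produce the $(1+T)(1+Z(T)^2)e^{-\epsilon\pi^2T}$ bound (the ``resonance'' you anticipate). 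So before proceeding you must reconcile your version of the Gronwall output with (\ref{eqn:rho}): the two differ by the reversal $b(T-\tau)$ versus $b(\tau)$ inside the integral, they are not equivalent, and only the paper's form implies the stated proposition. This is the one step you cannot leave to ``bookkeeping,'' and as written your plan does not close.
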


\begin{proof}
Differentiating as in the previous sections (\textit{c.f.} Proposition 3.2 of \cite{W93}), we obtain\begin{align*}
	-\frac{d}{dt} \|\rho\|^2 \leq & \left( \frac{\| H \|^2}{\lambda_1} + \frac{\| y_{1} \|_{\infty}}{\lambda_2} + \frac{2 \| p_2 \|}{\lambda_3} - 2 \epsilon \right) \| \rho_x\|^2 \\
	&+ \lambda_2 \| y_{1} \|_{\infty} \| \rho \|^2 + \left( \lambda_1 \|H\|^2 + 2 \lambda_3 \|p_2 \| \right) \| \delta_x \|^2.
\end{align*}
for any positive $\lambda_1$, $\lambda_2$ and $\lambda_3$. We choose $\lambda_1 = 3 \|H\|^2 / \epsilon$, $\lambda_2 = 3 \| y_{1} \|_{\infty} / \epsilon$ and $\lambda_3 = 6 \|p_2\| / \epsilon$, and hence find that
\begin{align*}
	-\frac{d}{dt} \|\rho\|^2 &\leq \left( \frac{3 \| y_{1} \|_{\infty}^2}{\epsilon} - \epsilon \pi^2 \right) \| \rho\|^2 + \left( \frac{3 \|H\|^4}{\epsilon} + 6 \frac{\|p_2 \|^2}{\epsilon} \right) \| \delta_x \|^2.
\end{align*}
Since $\rho(T) = 0$, it follows from Lemma \ref{lemma:gronwall} and Proposition \ref{prop:Pbound} that
\begin{align}
	\| \rho(0) \|^2 \leq C e^{-\epsilon \pi^2 T} \int_0^T (1 + Z(T)^2) e^{\epsilon \pi^2 t} \| \delta_x(t) \|^2 dt.
\label{eqn:rho}
\end{align}

To bound the integral term, we first recall the inequality
\begin{align*}
	\epsilon \| \delta_x \|^2 \leq \left( \frac{8 \| y_2 \|^2_{\infty} }{\epsilon} + \frac{2 \| y_1 \|^2_{\infty}}{\epsilon} \right) \| \delta \|^2 - \frac{d}{dt} \| \delta \|^2
\end{align*}
found in the proof of Proposition \ref{prop:deltabound}. Then, up to a constant,
\begin{align*}
	\| \delta_x \|^2 \leq \left( \| y_{2x} \|^2 + \| y_{1x} \|^2 \right) \| u_1 - u_2 \|^2 e^{-\epsilon \pi^2 t} - \frac{d}{dt} \| \delta \|^2.
\end{align*}
Upon inserting this expression into Equation (\ref{eqn:rho}) we have
\begin{align*}
	\int_0^T (1 + Z(T)^2) e^{\epsilon \pi^2 t} \| \delta_x(t) \|^2 dt \leq & C (1 + Z(T)^2) \| u_1 - u_2 \|^2 \\
	&  - \left. \left( 1 + Z(t)^2 \right) e^{\epsilon \pi^2 t} \| \delta(t) \|^2 \right|_0^T \\
	& + \int_0^T \frac{d}{dt} \left[ \left( 1 + Z(t)^2 \right) e^{\epsilon \pi^2 t} \right] \| \delta(t) \|^2 dt,
\end{align*}
where we have integrated by parts and used the fact that $Z(t)$ is nondecreasing. The boundary term arising from the integration by parts is bounded above by $\| \delta(0) \|^2 = \| u_1 - u_2 \|^2$. For the last term we have
\begin{align*}
	\frac{d}{dt} \left[ \left( 1 + Z(t)^2 \right) e^{\epsilon \pi^2 t} \right] \| \delta(t) \|^2 \leq C \left(1 + \| z(t) \|_Z^2 + Z(t)^2 \right) \| u_1 - u_2 \|^2
\end{align*}
and the result follows.
\end{proof}

%%%%%%%%%%%%%%%%%%%%%%%%%%%%%%%%%%%%%%%%%%%%%%%%%
%%%%%%%%%%%%%%%%%%%%%%%%%%%%%%%%%%%%%%%%%%%%%%%%%
%%%%%%%%%%%%%%%%%%%%%%%%%%%%%%%%%%%%%%%%%%%%%%%%%
%%%%%%%%%%%%%%%%%%%%%%%%%%%%%%%%%%%%%%%%%%%%%%%%%
%%%%%%%%%%%%%%%%%%%%%%%%%%%%%%%%%%%%%%%%%%%%%%%%%

\section{Implications for data assimilation}
In this section we relate the previous analysis to the widely-used $4D$-Var data assimilation scheme. In $4D$-Var one is interested in minimizing a cost functional of the form
\begin{align*}
	J_{4D}(u) := \sum_{i=1}^N \left\| H \left. y(u) \right|_{t = t_i} - z_i \right\|_Z^2 + \beta \| u - \bar{u} \|_V^2,
\end{align*}
which is defined in terms of a finite set of observations $z_1, \ldots, z_N$, taken at times $t_1, \ldots, t_N$. The observation space $Z$ is usually taken to be $\mathbb{R}^m$ with a weighted Euclidean norm:
\begin{align*}
	\| z \|_Z^2 = \| R^{-1/2} z \|^2_{\textrm{Euc.}}
\end{align*}
where $R$ is the observational error covariance matrix.

The minimization problem is typically solved using a gradient descent (or conjugate gradient) method, where the gradient can be evaluated using Equation (\ref{eqn:gradJ}), or a discrete analog for the case of finitely many observations (see \cite{AAR10} for details). Thus each evaluation of $DJ$ requires integration of the forward model (\ref{eqn:Burgers}), followed by integration of the adjoint model (\ref{eqn:adjoint}) backwards in time.

This approach, while guaranteed to produce a minimizer for $J_T$, will not necessarily find the global minimum, even under the assumption that it is unique. Only in the event that $J$ has a unique \textit{local} minimum can we ensure the convergence of a steepest descent algorithm to this desired minimum. It is thus of great interest to know when this uniqueness condition is satisfied.

For the continuous-time functional, $J_T$, it was shown in \cite{W93} that this is true for sufficiently small observation times $T$, but also observed that this property likely fails for larger $T$, due to the nonlinearity of the forward model $u \mapsto y(u)$, and the resulting nonconvexity of $J_T$. We have shown in this paper that uniqueness also holds for sufficiently large $T$, given some additional, physically reasonable, assumptions on the set of potential minimizers. This arises as a result of the large-time exponential decay exhibited by solutions to Burgers' equation, which essentially demonstrates the dominance of the linear diffusive term over the nonlinear advective term, in the large-$T$ limit. These results, however, do not immediately apply to the uniqueness problem for the $4D$-Var cost functional, as they assume continuous-times observations---a technical convenience that is of course impossible to realize in the real world.

Thus an important next step is to modify the present analysis so that it is applicable to the case of discrete observations, i.e. the true $4D$-Var minimization problem. It will also be of great interest to apply these techniques to problems in higher spatial dimensions, and with more complicated nonlinearities.

While the present study has only considered the inverse problem for Burgers' equation from the variational perspective, there is in fact a great deal more information encoded in the cost function $J$ than just the location and uniqueness of extrema. There is an equivalent Bayesian formulation of the inverse problem in which $J$ corresponds to the log-likelihood of the posterior distribution $\mathbb{P}(u | z)$ (see \cite{S10} for details). In this interpretation, minimizers of $J$ correspond to modes of the posterior distribution, but of course there is much that one can say about a probability distribution beyond its modal structure. For applications, covariance information is of great importance, as it gives an indication of how trustworthy our solution to the inverse problem is. The nonlinearity of Burgers' equation means that higher moments of the posterior distribution will also contain nontrivial information, as would any other measure of the global shape of the distribution. It is in this formulation that we hope to better understand the limiting behavior of solutions to the inverse problem for dynamical systems with more severe nonlinearities and more complicated asymptotic structure.

%%%%%%%%%%%%%%%%%%%%%%%%%%%%%%%%%%%%%%%%%%%%%%%%%
%%%%%%%%%%%%%%%%%%%%%%%%%%%%%%%%%%%%%%%%%%%%%%%%%
%%%%%%%%%%%%%%%%%%%%%%%%%%%%%%%%%%%%%%%%%%%%%%%%%
%%%%%%%%%%%%%%%%%%%%%%%%%%%%%%%%%%%%%%%%%%%%%%%%%
%%%%%%%%%%%%%%%%%%%%%%%%%%%%%%%%%%%%%%%%%%%%%%%%%

\end{document}